\documentclass[11pt, reqno]{amsart}
\usepackage{amsmath, amssymb, amsthm}
\usepackage{geometry}
\usepackage{graphicx}
\usepackage{microtype}
\usepackage[hidelinks]{hyperref} 

\newtheorem{theorem}{Theorem}[section]
\newtheorem{lemma}[theorem]{Lemma}
\newtheorem{proposition}[theorem]{Proposition}
\newtheorem{corollary}[theorem]{Corollary}

\theoremstyle{definition}
\newtheorem{definition}[theorem]{Definition}
\newtheorem{example}[theorem]{Example}

\theoremstyle{remark}
\newtheorem{remark}[theorem]{Remark}

\newcommand{\R}{\mathbb{R}}

\title[Almost RB solitons on contact metric 3-manifolds]
{Almost Ricci--Bourguignon Solitons on Contact Metric Three-Manifolds}
\author{Mohammad Aqib}
\address{Harish-Chandra Research Institute A CI of Homi Bhabha National
Institute Chhatnag Road, Jhunsi, Prayagraj-211019, India.}
\address{Homi Bhabha National Institute, Training School Complex, Anushakti Nagar, Mumbai 400094, India}
\email{mohammadaqib@hri.res.in}

\subjclass[2020]{53C15, 53C20, 53C25}

\keywords{almost Ricci--Bourguignon soliton, contact metric manifold,
Reeb vector field, Ricci operator}

\begin{document}

\begin{abstract}
We investigate almost Ricci--Bourguignon solitons on three-dimensional contact
metric manifolds. Under natural curvature assumptions, we show that the
additional freedom introduced by allowing the soliton function to vary is
rigidly constrained by the contact geometry. Using a local orthonormal \(\varphi\)-basis on the
non-Sasakian region, we derive the full component form of the almost
Ricci--Bourguignon soliton equation. As applications, we consider the cases
where the potential vector field is pointwise collinear with, or orthogonal to,
the Reeb vector field. For contact metric three-manifolds satisfying
\(Q\xi=\sigma\xi\), we prove that a collinear potential field must vanish on
the non-Sasakian region whenever \(\xi(\sigma)=0\). In the orthogonal case, when \(\sigma\) is constant and the manifold is
non-Sasakian, the almost soliton function is forced to be constant; hence the
soliton reduces to a Ricci--Bourguignon soliton. In fact, the metric is
Einstein, and if the orthogonal potential field is not identically zero, then the
metric is flat.
\end{abstract}

\maketitle

\section{Introduction}
Ricci solitons arise as self-similar solutions of the Ricci flow and play an
important role in the study of geometric evolution equations. The
Ricci--Bourguignon flow, introduced by Bourguignon, is defined by
\[
\frac{\partial g}{\partial t}
=
-2(\operatorname{Ric}-\rho r\,g),
\]
where \(r\) is the scalar curvature and \(\rho\in\mathbb R\) is a constant.
For \(\rho=0\), this reduces to the Ricci flow. The analytic theory of the
Ricci--Bourguignon flow, including short-time existence under suitable
conditions on \(\rho\), was developed in \cite{catino2017rbflow}.

The notion of Ricci almost soliton was introduced by Pigola, Rigoli, Rimoldi
and Setti \cite{pigola2011ricci}, by allowing the soliton constant to be a
smooth function. Several extensions and applications of almost Ricci solitons
have since been studied; see, for example, \cite{guler,blaga,MR2869087}.
In the Ricci--Bourguignon setting, almost solitons were considered by
Dwivedi \cite{dwivedi2021rb}.

\begin{definition}
    An {\it almost Ricci--Bourguignon soliton} (almost RB soliton for short) is a Riemannian manifold $(M^n, g)$ endowed with a vector field $V$ on $M$ and a soliton function $\lambda: M\to\mathbb{R}$ that satisfies
    \begin{equation}\label{rbsoliton}
     \operatorname{Ric}+\frac{1}{2} \mathcal{L}_{V} g=(\lambda +\rho\, r) g,
 \end{equation}
where $\mathcal{L}_V g$ denotes the Lie derivative of the metric $g$ with respect to the vector field $V$,
$r$ denotes the scalar curvature of the almost RB soliton
and $\rho \in \R$ is a constant.  
\end{definition}

Again, if the vector field $V$ is a gradient of some smooth function $f$ on $M$, then it is called a gradient almost RB soliton. In this case, the foregoing equation reduces to
\begin{equation}\label{grbsoliton}
\nabla^{2} f + \operatorname{Ric}=(\lambda+\rho  r) g 
\end{equation}

where $\nabla^{2} f$ stands for the Hessian of $f$. If the potential function \(f\) is constant, then the gradient soliton is
trivial. Perelman \cite{perelman2002} proved that the Ricci solitons on a compact manifold are always gradient Ricci solitons and, hence, the potential vector field can be expressed as the sum of the gradient of a function and a Killing vector field. Such a result is also true for the almost RB solitons if the scalar curvature is constant (see \cite{Aqib}). For further details on almost RB solitons, refer to \cite{dwivedi2021rb} and \cite{ghosh2022rb}.

Ricci--Bourguignon solitons on three-dimensional contact metric manifolds
were recently studied by Khatri and Singh \cite{khatri2024rbcontact}. Their
work concerns the case where the soliton parameter is constant. In contrast,
we consider the almost Ricci--Bourguignon equation, where the soliton function
\(\lambda\) is allowed to vary. Our results show that, under the curvature
condition \(Q\xi=\sigma\xi\), this extra freedom is strongly restricted by the
contact metric structure. In particular, in the Reeb-orthogonal non-Sasakian
case with \(\sigma\) constant, the soliton function is forced to be constant,
so the almost soliton reduces to a Ricci--Bourguignon soliton.

The main results may be summarised as follows. If \(Q\xi=\sigma\xi\) and
\(\xi(\sigma)=0\), then any almost Ricci--Bourguignon soliton with potential
field \(V=f\xi\) satisfies \(f=0\) on the non-Sasakian region. Thus, if \(V\) is
nowhere vanishing, the manifold must be Sasakian. In the Reeb-orthogonal
non-Sasakian case, when \(\sigma\) is constant, the \((\xi,\xi)\)-component
forces
\[
\lambda+\rho r=\sigma.
\]
Consequently, the soliton function is constant and almost
Ricci--Bourguignon soliton reduces to a Ricci--Bourguignon soliton. The
reduced component system then implies that the metric is Einstein; if the
orthogonal potential field is not identically zero, the metric is flat.

The present work complements the Ricci--Bourguignon soliton classifications of
Khatri and Singh \cite{khatri2024rbcontact} by considering the almost soliton
setting.

\section{Preliminaries}
A \( (2n+1) \)-dimensional Riemannian manifold \( (M, g) \) is called a contact metric manifold if it admits:
\begin{itemize}
    \item A global 1-form \( \eta \) such that \( \eta \wedge (d\eta)^n \neq 0 \) at every point on \( M \) (the contact 1-form),
    \item  A unit vector field \( \xi \), called the Reeb vector field, satisfying \( \eta(\xi) = 1 \) and \( d\eta(\xi, \cdot) = 0 \),
    \item A (1,1)-tensor field \( \varphi \) such that
\end{itemize}
\begin{align}\label{2.1c}
    d \eta(X, Y)&=g(X, \varphi Y), \quad \eta(X)=g(X, \xi),\quad \varphi^{2} X=-X+\eta(X) \xi
\end{align}

From the above, it follows that:
\begin{align}\label{2.1f}
    \varphi \xi&=0,  \quad \eta \circ \varphi=0,\quad    g(\varphi X, \varphi Y)=g(X, Y)-\eta(X) \eta(Y).
\end{align}

A Riemannian manifold \( M^{2n+1} \) equipped with \( (\varphi, \xi, \eta, g) \) is called a contact metric manifold. Next, we define two self-adjoint operators $h$ and $l$ by $h=\frac{1}{2} \mathcal{L}_{\xi} \varphi$ and $l=R(., \xi) \xi$. These tensors $h$ and $h \varphi$ are trace-free and satisfy $h \varphi=-\varphi h$. For a contact metric manifold, the following formulas hold \cite{MR1874240}
\begin{align}\label{2.1b}
\nabla_{X} \xi  =-\varphi X-\varphi h X  \quad
g(Q \xi, \xi)  =\operatorname{Trl}=2 n-|h|^{2} 
\end{align}

If \( \xi \) is a Killing vector field (\( h = 0 \) or \( \text{Tr}(l) = 2n \)), the contact metric manifold is called a \(K\)-contact manifold. On a $K$-contact manifold, the following formulas are known \cite{MR1874240}
\begin{align}\label{2.3b}
\nabla_{X} \xi  =-\varphi X,  \quad
Q \xi  =2 n \xi, \quad
R(X, \xi) \xi  =X-\eta(X) \xi, 
\end{align}

 A contact metric structure is said to be Sasakian if the metric cone $C(M)\left(d r^{2}+r^{2} g, d\left(r^{2} \eta\right)\right)$ is Kähler (see \cite{MR1823927}). The formulas from equations \eqref{2.3b} are also valid for a Sasakian manifold. Moreover, a contact metric manifold is Sasakian if
\begin{equation}
    \left(\nabla_{X} \varphi\right) Y=g(X, Y) \xi-\eta(Y) X,
\end{equation}
or equivalently
\begin{equation}\label{2.6b}
R(X, Y) \xi=\eta(Y) X-\eta(X) Y 
\end{equation}

A Sasakian manifold is $K$-contact, and the converse is not true except in dimension 3.

\subsection{Three-dimensional contact metric manifolds}

Let \((M^3,\varphi,\xi,\eta,g)\) be a three-dimensional contact metric
manifold. On the open set
\[
U=\{p\in M:h_p\neq0\},
\]
there exists a local orthonormal \(\varphi\)-basis
\[
\{e,\varphi e,\xi\}
\]
such that
\[
he=\ell e,\qquad h\varphi e=-\ell\varphi e,
\]
where \(\ell>0\) is a smooth function on \(U\). In this basis the Levi-Civita
connection is given by
\begin{align}
\nabla_\xi e&=a\varphi e, &
\nabla_\xi\varphi e&=-ae, &
\nabla_\xi\xi&=0, \label{pre-conn1}\\
\nabla_e\xi&=-(1+\ell)\varphi e, &
\nabla_e e&=b\varphi e, &
\nabla_e\varphi e&=-be+(1+\ell)\xi, \label{pre-conn2}\\
\nabla_{\varphi e}\xi&=(1-\ell)e, &
\nabla_{\varphi e}\varphi e&=ce, &
\nabla_{\varphi e}e&=-c\varphi e+(\ell-1)\xi. \label{pre-conn3}
\end{align}
Moreover,
\[
b=\frac{(\varphi e)(\ell)+A}{2\ell},\qquad
c=\frac{e(\ell)+B}{2\ell},
\]
where
\[
A=\operatorname{Ric}(e,\xi),\qquad
B=\operatorname{Ric}(\varphi e,\xi).
\]
The Ricci operator is given by
\begin{align}
Qe
&=
\left(\frac r2-1+\ell^2-2a\ell\right)e
+Z\varphi e
+A\xi, \label{pre-ric1}\\
Q\varphi e
&=
Ze+
\left(\frac r2-1+\ell^2+2a\ell\right)\varphi e
+B\xi, \label{pre-ric2}\\
Q\xi
&=
Ae+B\varphi e+2(1-\ell^2)\xi, \label{pre-ric3}
\end{align}
where
\[
Z=\xi(\ell).
\]
These formulas are standard in the study of three-dimensional contact metric
manifolds; see, for example, Koufogiorgos \cite{koufogiorgos1995} and
Chen \cite{chen2021cotton}.

\section{Main Results}

The next proposition provides a complete local decomposition of the almost Ricci--Bourguignon soliton equation in an adapted \(\varphi\)-frame. This formulation is the main technical tool used to prove the rigidity results for Reeb-aligned and Reeb-orthogonal potentials in the remainder of the paper.

\begin{proposition}[Local decomposition of the almost Ricci--Bourguignon soliton equation]
\label{prop:component-system}
Let \((M^3,\varphi,\xi,\eta,g)\) be a three-dimensional contact metric manifold.
On the open set
\[
U=\{p\in M:h_p\neq 0\},
\]
choose a local orthonormal \(\varphi\)-basis
\[
\{e,\varphi e,\xi\}
\]
such that
\[
he=\ell e,\qquad h\varphi e=-\ell\varphi e,
\]
where \(\ell>0\) is a smooth function on \(U\). Let
\[
V=f_1e+f_2\varphi e+f_3\xi
\]
be a vector field on \(U\), and put
\[
\Lambda=\lambda+\rho r.
\]
Then the almost Ricci--Bourguignon soliton equation
\[
\operatorname{Ric}+\frac12\mathcal L_Vg=\Lambda g
\]
is equivalent to the following system:
\begin{align}
\frac r2-1+\ell^2-2a\ell+e(f_1)-bf_2
&=\Lambda, \label{S1}\\
\frac r2-1+\ell^2+2a\ell+(\varphi e)(f_2)-cf_1
&=\Lambda, \label{S2}\\
2(1-\ell^2)+\xi(f_3)
&=\Lambda, \label{S3}\\
Z+\frac12\left\{e(f_2)+(\varphi e)(f_1)+bf_1+cf_2-2\ell f_3\right\}
&=0, \label{S4}\\
A+\frac12\left\{e(f_3)+\xi(f_1)+(1+\ell-a)f_2\right\}
&=0, \label{S5}\\
B+\frac12\left\{(\varphi e)(f_3)+\xi(f_2)+(a+\ell-1)f_1\right\}
&=0. \label{S6}
\end{align}
Here
\[
A=\operatorname{Ric}(e,\xi),\qquad
B=\operatorname{Ric}(\varphi e,\xi),\qquad
Z=\xi(\ell).
\]
\end{proposition}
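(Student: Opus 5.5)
Since \(\{e,\varphi e,\xi\}\) is an orthonormal frame on \(U\), the symmetric tensor equation \(\operatorname{Ric}+\frac12\mathcal L_Vg-\Lambda g=0\) holds if and only if its six independent components on this frame vanish. These are the three diagonal pairs \((e,e),(\varphi e,\varphi e),(\xi,\xi)\) and the three mixed pairs \((e,\varphi e),(e,\xi),(\varphi e,\xi)\). The plan is to compute each component and match it with \eqref{S1}--\eqref{S6}. The Ricci parts come directly from \eqref{pre-ric1}--\eqref{pre-ric3}:
\[
\operatorname{Ric}(e,e)=\tfrac r2-1+\ell^2-2a\ell,\qquad
\operatorname{Ric}(\varphi e,\varphi e)=\tfrac r2-1+\ell^2+2a\ell,\qquad
\operatorname{Ric}(\xi,\xi)=2(1-\ell^2),
\]
and \(\operatorname{Ric}(e,\varphi e)=Z\), \(\operatorname{Ric}(e,\xi)=A\), \(\operatorname{Ric}(\varphi e,\xi)=B\). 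The \(g\)-part contributes \(\Lambda\) on the diagonal and nothing off it.

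For the Lie derivative, use \((\mathcal L_Vg)(X,Y)=g(\nabla_XV,Y)+g(\nabla_YV,X)\). The key step is to compute \(\nabla_XV\) for \(X\in\{e,\varphi e,\xi\}\) from \eqref{pre-conn1}--\eqref{pre-conn3}, writing
\[
\nabla_XV=X(f_1)e+X(f_2)\varphi e+X(f_3)\xi+f_1\nabla_Xe+f_2\nabla_X\varphi e+f_3\nabla_X\xi .
\]
This gives
\begin{align*}
\nabla_eV&=\bigl(e(f_1)-bf_2\bigr)e+\bigl(e(f_2)+bf_1-(1+\ell)f_3\bigr)\varphi e+\bigl(e(f_3)+(1+\ell)f_2\bigr)\xi,\\
\nabla_{\varphi e}V&=\bigl((\varphi e)(f_1)+cf_2+(1-\ell)f_3\bigr)e+\bigl((\varphi e)(f_2)-cf_1\bigr)\varphi e+\bigl((\varphi e)(f_3)+(\ell-1)f_1\bigr)\xi,\\
\nabla_\xi V&=\bigl(\xi(f_1)-af_2\bigr)e+\bigl(\xi(f_2)+af_1\bigr)\varphi e+\xi(f_3)\xi.
\end{align*}

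Taking inner products then gives each component. The diagonal entries of \(\frac12\mathcal L_Vg\) are \(g(\nabla_XV,X)\), namely \(e(f_1)-bf_2\), \((\varphi e)(f_2)-cf_1\) and \(\xi(f_3)\). These yield \eqref{S1}--\eqref{S3}. For the mixed entries:
\begin{itemize}
\item The \((e,\varphi e)\) entry sums \(e(f_2)+bf_1-(1+\ell)f_3\) and \((\varphi e)(f_1)+cf_2+(1-\ell)f_3\). The \(f_3\) terms combine to \(-2\ell f_3\), which gives \eqref{S4}.
\item The \((e,\xi)\) entry sums \(e(f_3)+(1+\ell)f_2\) and \(\xi(f_1)-af_2\), which gives \eqref{S5}.
\item The \((\varphi e,\xi)\) entry sums \((\varphi e)(f_3)+(\ell-1)f_1\) and \(\xi(f_2)+af_1\), which gives \eqref{S6}.
\end{itemize}
Conversely, if \eqref{S1}--\eqref{S6} hold, all frame components of the symmetric tensor vanish, so the equation holds on \(U\). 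This proves the equivalence.

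The main obstacle is sign and index bookkeeping, especially in the mixed components where the \(f_3\) and \(a\)-terms from \(\nabla\xi\) and \(\nabla_\xi\) must combine correctly. For instance, \(-(1+\ell)+(1-\ell)=-2\ell\) in \eqref{S4}, and \((1+\ell)-a\) in \eqref{S5}. I would check these against \(\nabla_X\xi=-\varphi X-\varphi hX\), using \(\varphi^2e=-e\) to confirm \(\nabla_{\varphi e}\xi=(1-\ell)e\). I would also verify that the frame connection is metric, i.e.\ the coefficients are skew as displayed.
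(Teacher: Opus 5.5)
Your proposal is correct and follows the paper's proof step for step. You expand $\nabla_XV$ in the $\varphi$-frame using the connection formulas, symmetrize to obtain the six components of $\frac12\mathcal L_Vg$ (including the combinations $-2\ell f_3$, $(1+\ell-a)f_2$ and $(a+\ell-1)f_1$, which match the paper exactly), add the frame Ricci components from the formulas for $Q$, and get the converse because all frame components of the symmetric tensor vanish.
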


\begin{proof}
Let
\[
V=f_1e+f_2\varphi e+f_3\xi.
\]
Using \eqref{pre-conn1}--\eqref{pre-conn3}, we compute
\begin{align}
\nabla_eV
={}&\bigl(e(f_1)-bf_2\bigr)e
+\bigl(bf_1+e(f_2)-(1+\ell)f_3\bigr)\varphi e  \nonumber\\
&+\bigl((1+\ell)f_2+e(f_3)\bigr)\xi, \label{nablaeV}\\
\nabla_{\varphi e}V
={}&\bigl((\varphi e)(f_1)+cf_2+(1-\ell)f_3\bigr)e
+\bigl(-cf_1+(\varphi e)(f_2)\bigr)\varphi e \nonumber\\
&+\bigl((\ell-1)f_1+(\varphi e)(f_3)\bigr)\xi, \label{nablaphieV}\\
\nabla_\xi V
={}&\bigl(\xi(f_1)-af_2\bigr)e
+\bigl(af_1+\xi(f_2)\bigr)\varphi e
+\xi(f_3)\xi. \label{nablaxiV}
\end{align}
Therefore,
\[
\frac12(\mathcal L_Vg)(X,Y)
=
\frac12\{g(\nabla_XV,Y)+g(\nabla_YV,X)\}.
\]
Computing the symmetric part of the covariant derivative of \(V\), we obtain,
\begin{align}
\frac12(\mathcal L_Vg)(e,e)
&=e(f_1)-bf_2, \label{lie1}\\
\frac12(\mathcal L_Vg)(\varphi e,\varphi e)
&=(\varphi e)(f_2)-cf_1, \label{lie2}\\
\frac12(\mathcal L_Vg)(\xi,\xi)
&=\xi(f_3), \label{lie3}\\
\frac12(\mathcal L_Vg)(e,\varphi e)
&=\frac12\left\{e(f_2)+(\varphi e)(f_1)+bf_1+cf_2-2\ell f_3\right\}, \label{lie4}\\
\frac12(\mathcal L_Vg)(e,\xi)
&=\frac12\left\{e(f_3)+\xi(f_1)+(1+\ell-a)f_2\right\}, \label{lie5}\\
\frac12(\mathcal L_Vg)(\varphi e,\xi)
&=\frac12\left\{(\varphi e)(f_3)+\xi(f_2)+(a+\ell-1)f_1\right\}. \label{lie6}
\end{align}

From \eqref{pre-ric1}--\eqref{pre-ric3}, the Ricci components are
\begin{align}
\operatorname{Ric}(e,e)
&=\frac r2-1+\ell^2-2a\ell, \label{ric1}\\
\operatorname{Ric}(\varphi e,\varphi e)
&=\frac r2-1+\ell^2+2a\ell, \label{ric2}\\
\operatorname{Ric}(\xi,\xi)
&=2(1-\ell^2), \label{ric3}\\
\operatorname{Ric}(e,\varphi e)
&=Z, \label{ric4}\\
\operatorname{Ric}(e,\xi)
&=A, \label{ric5}\\
\operatorname{Ric}(\varphi e,\xi)
&=B. \label{ric6}
\end{align}

Substituting \eqref{lie1}--\eqref{lie6} and \eqref{ric1}--\eqref{ric6}
into
\[
\operatorname{Ric}+\frac12\mathcal L_Vg=\Lambda g
\]
for the six independent pairs
\[
(e,e),\quad
(\varphi e,\varphi e),\quad
(\xi,\xi),\quad
(e,\varphi e),\quad
(e,\xi),\quad
(\varphi e,\xi),
\]
we obtain precisely \eqref{S1}--\eqref{S6}.
Conversely, if equations \eqref{S1}--\eqref{S6} hold, then all components of
the tensor
\[
\operatorname{Ric}+\frac12\mathcal L_Vg-\Lambda g
\]
vanish with respect to the orthonormal basis \(\{e,\varphi e,\xi\}\).
Hence, the tensor identity follows.
\end{proof}

The significance of Proposition \ref{prop:component-system} is that it converts the almost Ricci--Bourguignon soliton equation into a first-order system whose structure makes the interaction between the potential field and the Reeb direction completely explicit. The rigidity results proved below arise from imposing geometric conditions on this system.

\begin{corollary}[Trace equation]
\label{cor:trace-equation}
Under the assumptions of Proposition \ref{prop:component-system}, one has
\[
\operatorname{div}V=3\Lambda-r.
\]
Equivalently,
\[
e(f_1)+(\varphi e)(f_2)+\xi(f_3)-cf_1-bf_2
=
3\lambda+(3\rho-1)r.
\]
\end{corollary}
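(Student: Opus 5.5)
The plan is to take the trace of the almost Ricci--Bourguignon equation with respect to $g$ and then read off the frame expression of $\operatorname{div}V$ from the diagonal equations of Proposition \ref{prop:component-system}. Tracing $\operatorname{Ric}+\frac12\mathcal L_Vg=\Lambda g$ in dimension three uses two facts: $\operatorname{tr}_g\operatorname{Ric}=r$ and $\operatorname{tr}_g\frac12\mathcal L_Vg=\operatorname{div}V$. Together they give
\[
r+\operatorname{div}V=3\Lambda ,
\]
which is the first assertion, $\operatorname{div}V=3\Lambda-r$.

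For the second form, I would add \eqref{S1}, \eqref{S2} and \eqref{S3}, or equivalently sum the diagonal Lie derivative components \eqref{lie1}--\eqref{lie3}. Since $\{e,\varphi e,\xi\}$ is orthonormal, this sum is
\[
\operatorname{div}V=e(f_1)+(\varphi e)(f_2)+\xi(f_3)-cf_1-bf_2 .
\]
As a consistency check, summing the Ricci diagonal entries \eqref{ric1}--\eqref{ric3} gives $r-2+2\ell^2+2(1-\ell^2)=r$: the $a\ell$ terms cancel and the constants and $\ell^2$ terms cancel, as they must. Finally, substituting $\Lambda=\lambda+\rho r$ into $3\Lambda-r$ gives $3\lambda+(3\rho-1)r$, which is the stated equivalent form.

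There is no real obstacle here. The only point needing care is to check that the $\xi$-coefficient terms do not contribute to the divergence. The $f_3$ contributions to the $e$- and $\varphi e$-components of $\nabla_eV$ and $\nabla_{\varphi e}V$ are off-diagonal, and on the diagonal they drop out because $\operatorname{div}\xi=0$ (from $\nabla\xi=-\varphi-\varphi h$, which is trace-free).
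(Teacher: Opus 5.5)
Your proposal is correct and follows essentially the same route as the paper: you trace the soliton equation to get $r+\operatorname{div}V=3\Lambda$, then identify $\operatorname{div}V$ in the orthonormal frame as the sum of the diagonal terms $g(\nabla_{e_i}V,e_i)$, which are exactly \eqref{lie1}--\eqref{lie3}. Your consistency check on the Ricci diagonal and your observation that $f_3$ enters only through $\xi(f_3)$ (because $\operatorname{div}\xi=0$) are accurate and harmless additions.
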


\begin{proof}
Taking the trace of
\[
\operatorname{Ric}+\frac12\mathcal L_Vg=\Lambda g
\]
gives
\[
r+\operatorname{div}V=3\Lambda.
\]
Hence
\[
\operatorname{div}V=3\Lambda-r.
\]
Using the expressions for \(\nabla_e V\), \(\nabla_{\varphi e} V\) and \(\nabla_\xi V\) obtained in the proof of Proposition \ref{prop:component-system}, we compute
\[
\operatorname{div}V
=
g(\nabla_eV,e)+g(\nabla_{\varphi e}V,\varphi e)+g(\nabla_\xi V,\xi),
\]
and therefore
\[
\operatorname{div}V
=
e(f_1)+(\varphi e)(f_2)+\xi(f_3)-cf_1-bf_2.
\]
This proves the claim.
\end{proof}

\begin{remark}[The case \(V=f\xi\)]
\label{rem:V-collinear-xi}
If \(V\) is pointwise collinear with the Reeb vector field, say
\[
V=f\xi,
\]
then
\[
f_1=f_2=0,\qquad f_3=f.
\]
The system \eqref{S1}--\eqref{S6} reduces to
\begin{align}
\frac r2-1+\ell^2-2a\ell&=\Lambda, \label{col1}\\
\frac r2-1+\ell^2+2a\ell&=\Lambda, \label{col2}\\
2(1-\ell^2)+\xi(f)&=\Lambda, \label{col3}\\
Z-\ell f&=0, \label{col4}\\
A+\frac12 e(f)&=0, \label{col5}\\
B+\frac12(\varphi e)(f)&=0. \label{col6}
\end{align}
In particular, subtracting \eqref{col1} from \eqref{col2} gives
\[
4a\ell=0.
\]
Since \(\ell>0\) on \(U\), we obtain
\[
a=0.
\]
\end{remark}

\begin{remark}[The case \(V\perp \xi\)]
\label{rem:V-orthogonal-xi}
If the potential vector field is orthogonal to the Reeb vector field, then
\[
V=f_1e+f_2\varphi e,
\]
so that \(f_3=0\). In this case, \eqref{S1}--\eqref{S6} reduce to
\begin{align}
\frac r2-1+\ell^2-2a\ell+e(f_1)-bf_2
&=\Lambda, \label{orth1}\\
\frac r2-1+\ell^2+2a\ell+(\varphi e)(f_2)-cf_1
&=\Lambda, \label{orth2}\\
2(1-\ell^2)
&=\Lambda, \label{orth3}\\
Z+\frac12\left\{e(f_2)+(\varphi e)(f_1)+bf_1+cf_2\right\}
&=0, \label{orth4}\\
A+\frac12\left\{\xi(f_1)+(1+\ell-a)f_2\right\}
&=0, \label{orth5}\\
B+\frac12\left\{\xi(f_2)+(a+\ell-1)f_1\right\}
&=0. \label{orth6}
\end{align}
\end{remark}

\begin{remark}[The case \(Q\xi=\sigma\xi\)]
\label{rem:Qxi-sigma-xi}
If
\[
Q\xi=\sigma\xi,
\]
then, by \eqref{pre-ric3},
\[
A=0,\qquad B=0,
\]
and
\[
\sigma=2(1-\ell^2).
\]
If, moreover, \(\sigma\) is constant along the Reeb vector field \(\xi\), then
\[
\xi(\sigma)=0.
\]
Since
\[
\sigma=2(1-\ell^2),
\]
we obtain
\[
\xi(\sigma)=-4\ell\,\xi(\ell)=-4\ell Z.
\]
On \(U\), \(\ell>0\), and hence
\[
Z=0.
\]
Therefore, under the assumptions
\[
Q\xi=\sigma\xi,\qquad \xi(\sigma)=0,
\]
the system \eqref{S1}--\eqref{S6} simplifies by setting
\[
A=B=Z=0.
\]
\end{remark}

\begin{theorem}
\label{thm:collinear-Qxi}
Let \((M^3,\varphi,\xi,\eta,g)\) be a three-dimensional contact metric
manifold satisfying
\[
Q\xi=\sigma\xi,
\]
where \(\sigma\) is a smooth function such that \(\xi(\sigma)=0\).
Assume that \(g\) defines an almost Ricci--Bourguignon soliton
\[
\operatorname{Ric}+\frac12\mathcal L_Vg=(\lambda+\rho r)g
\]
whose potential vector field is pointwise collinear with the Reeb vector field,
that is,
\[
V=f\xi
\]
for some smooth function \(f\). Then \(f=0\) on the non-Sasakian open set
\[
U=\{p\in M:h_p\neq0\}.
\]
Consequently, if \(V\) is nowhere vanishing on \(M\), then \(U=\varnothing\), and hence
\(M\) is \(K\)-contact. In particular, since \(\dim M=3\), \(M\) is Sasakian. Equivalently, under the condition \(\xi(\sigma)=0\), a non-Sasakian contact
metric three-manifold satisfying \(Q\xi=\sigma\xi\) admits no nowhere-vanishing
Reeb-aligned almost Ricci--Bourguignon potential.
\end{theorem}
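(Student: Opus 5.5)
On $U$ we work in the local orthonormal $\varphi$-basis $\{e,\varphi e,\xi\}$ and combine the two reductions already recorded. Since $V=f\xi$, Remark \ref{rem:V-collinear-xi} gives the system \eqref{col1}--\eqref{col6} together with $a=0$. Since $Q\xi=\sigma\xi$ and $\xi(\sigma)=0$, Remark \ref{rem:Qxi-sigma-xi} gives $A=B=Z=0$ and $\sigma=2(1-\ell^2)$.

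The key step is equation \eqref{col4}, $Z-\ell f=0$. With $Z=0$ this becomes $\ell f=0$. Because $\ell>0$ on $U$, it follows that $f=0$ at every point of $U$. This is the whole content of the first claim. The frame is only local, but every point of $U$ has a neighbourhood carrying such a frame, so $f\equiv0$ on all of $U$. Equation \eqref{col4} comes from the $(e,\varphi e)$-component \eqref{S4} with $f_1=f_2=0$ and $f_3=f$. It is consistent with \eqref{col5} and \eqref{col6}, since $A=B=0$ and $f=0$ make both hold trivially, so no contradiction arises elsewhere.

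For the consequences, suppose $V$ is nowhere vanishing. Then $f\neq0$ everywhere, so $U$ must be empty, which means $h=0$ on all of $M$. By the preliminaries, $h=0$ means $\xi$ is Killing, so $M$ is $K$-contact. In dimension three $K$-contact is equivalent to Sasakian, so $M$ is Sasakian. The final ``equivalently'' sentence is the contrapositive: if $M$ is non-Sasakian then $U\neq\varnothing$, so $V=f\xi$ vanishes somewhere.

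There is no real obstacle here. The only points needing care are that $Z=\xi(\ell)$ vanishes, which uses $\xi(\sigma)=-4\ell Z$ and $\ell>0$, and that the local vanishing of $f$ patches to all of $U$ because the conclusion $f=0$ is pointwise and does not depend on the choice of frame.
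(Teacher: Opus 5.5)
Your proposal is correct and follows the paper's proof essentially step for step. From $Q\xi=\sigma\xi$ you get $A=B=0$ and $\sigma=2(1-\ell^2)$, so $\xi(\sigma)=-4\ell\,\xi(\ell)=0$ forces $Z=0$; the $(e,\varphi e)$-component $Z-\ell f=0$ then gives $f=0$ on $U$, and in the nowhere-vanishing case $U=\varnothing$, so $h=0$ and $M$ is $K$-contact, hence Sasakian in dimension three, exactly as in the paper.
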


\begin{proof}
On \(U\), choose a local orthonormal \(\varphi\)-basis
\[
\{e,\varphi e,\xi\}
\]
such that
\[
he=\ell e,\qquad h\varphi e=-\ell\varphi e,
\]
where \(\ell>0\).

Since
\[
Q\xi=\sigma\xi,
\]
the local Ricci operator formula gives
\[
A=0,\qquad B=0,
\]
and
\[
\sigma=2(1-\ell^2).
\]
Because \(\xi(\sigma)=0\), we have
\[
0=\xi(\sigma)=\xi\bigl(2(1-\ell^2)\bigr)
=-4\ell\,\xi(\ell).
\]
As \(\ell>0\) on \(U\), this implies
\[
\xi(\ell)=0.
\]
Thus
\[
Z=\xi(\ell)=0.
\]

Now let
\[
V=f\xi.
\]
In the component system of Proposition \ref{prop:component-system}, this
means
\[
f_1=f_2=0,\qquad f_3=f.
\]
The \((e,\varphi e)\)-component of the soliton equation becomes
\[
Z-\ell f=0.
\]
Since \(Z=0\) and \(\ell>0\) on \(U\), we get
\[
f=0
\]
on \(U\).

Since \(\xi\) is nowhere zero, \(V=f\xi\) is nowhere vanishing if and only if
\(f\neq0\) everywhere. As \(f=0\) on the open set \(U\), it follows that \(U=\emptyset\). Thus, the only possible Reeb-aligned almost Ricci--Bourguignon potential on the non-Sasakian region is the trivial one.
Hence \(h=0\) everywhere. Thus \(M\) is \(K\)-contact. Since every
three-dimensional \(K\)-contact manifold is Sasakian, the conclusion follows.
\end{proof}

\begin{corollary}
\label{cor:nonzero-collinear-sasakian}
Let \((M^3,\varphi,\xi,\eta,g)\) be a three-dimensional contact metric
manifold satisfying \(Q\xi=\sigma\xi\), where \(\xi(\sigma)=0\). If \(g\)
admits an almost Ricci--Bourguignon soliton whose potential vector field is
nowhere vanishing and pointwise collinear with \(\xi\), then \(M\) is Sasakian.
\end{corollary}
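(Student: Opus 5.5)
This corollary follows directly from Theorem \ref{thm:collinear-Qxi}, so the plan is a short reduction. Write $V=f\xi$ for some smooth function $f$; this representation is legitimate because $V$ is pointwise collinear with the unit vector field $\xi$, so $f=\eta(V)=g(V,\xi)$ is smooth. Since $\xi$ is nowhere zero, $V$ is nowhere vanishing exactly when $f$ is nowhere zero on $M$.

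Next, apply Theorem \ref{thm:collinear-Qxi} on the open set $U=\{p\in M:h_p\neq0\}$. Under the hypotheses $Q\xi=\sigma\xi$ and $\xi(\sigma)=0$, Remark \ref{rem:Qxi-sigma-xi} gives $A=B=Z=0$ on $U$. The $(e,\varphi e)$-component \eqref{col4} of the reduced system in Remark \ref{rem:V-collinear-xi} then reads $-\ell f=0$. Because $\ell>0$ on $U$, this forces $f\equiv0$ on $U$.

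If $U$ were nonempty, any point of $U$ would be a zero of $V$, contradicting the nowhere-vanishing hypothesis. Hence $U=\varnothing$, so $h=0$ on all of $M$. Then $\xi$ is Killing and $M$ is $K$-contact. In dimension three every $K$-contact manifold is Sasakian, as recalled in the preliminaries, so $M$ is Sasakian.

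There is essentially no obstacle here. The only point to watch is that the $\varphi$-basis exists only locally on $U$, but the conclusion $f=0$ is pointwise, so a covering of $U$ by such frames suffices.
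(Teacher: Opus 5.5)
Your proposal is correct and follows the paper's own route. The corollary is just the final clause of Theorem~\ref{thm:collinear-Qxi}, whose proof likewise obtains $A=B=Z=0$ on $U$, uses the $(e,\varphi e)$-component $Z-\ell f=0$ to force $f=0$ on $U$, then concludes $U=\varnothing$ from the nowhere-vanishing hypothesis, so $h=0$, $M$ is $K$-contact, and hence $M$ is Sasakian in dimension three (your explicit contradiction argument for $U=\varnothing$ and the remark about covering $U$ by local $\varphi$-frames make this slightly cleaner than the paper's wording).
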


\begin{lemma}
\label{lem:constants-from-koufogiorgos}
Let \((M^3,\varphi,\xi,\eta,g)\) be a connected complete non-Sasakian contact
metric three-manifold satisfying
\[
Q\xi=\sigma\xi,
\]
where \(\sigma\) is constant. Then, on the adapted \(\varphi\)-basis
\(\{e,\varphi e,\xi\}\) with
\[
he=\ell e,\qquad h\varphi e=-\ell\varphi e,\qquad \ell>0,
\]
the functions \(\ell\), \(a\), and the scalar curvature \(r\) are constant.
\end{lemma}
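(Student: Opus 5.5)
The argument has two parts. Constancy of \(\ell\) comes straight from \(Q\xi=\sigma\xi\). Constancy of \(a\) and \(r\) comes from the second Bianchi identity in the \(\varphi\)-frame together with the classification of Koufogiorgos \cite{koufogiorgos1995}.

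\textbf{Step 1: \(\ell\) is constant and \(A=B=Z=0\).} By Remark \ref{rem:Qxi-sigma-xi}, on \(U\) we have \(A=B=0\) and \(\sigma=2(1-\ell^2)\). Since \(\sigma\) is constant and \(\ell>0\), this gives \(\ell=\sqrt{1-\sigma/2}\), a positive constant (so necessarily \(\sigma<2\)). In particular \(Z=\xi(\ell)=0\), \(e(\ell)=(\varphi e)(\ell)=0\), and hence \(b=c=0\) in \eqref{pre-conn2}--\eqref{pre-conn3}. The non-Sasakian region \(U=\{\ell\neq0\}\) is then open and closed, since \(\ell\) is locally constant on \(U\) and \(h\) is continuous. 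Because \(M\) is connected and not Sasakian, \(U=M\). So the \(\varphi\)-basis exists locally around every point, and \(\ell\) is one global constant.

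\textbf{Step 2: \(a\) and \(r\) from the Bianchi identity.} With \(b=c=0\) and \(\ell\) constant, I would compute the Jacobi identities \([e,\varphi e]\), \([e,\xi]\), \([\varphi e,\xi]\) from \eqref{pre-conn1}--\eqref{pre-conn3}, or equivalently evaluate the contracted Bianchi identity \(\operatorname{div}Q=\tfrac12\,dr\) componentwise using \eqref{pre-ric1}--\eqref{pre-ric3} with \(A=B=Z=0\).

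The \(\xi\)-component of \(\operatorname{div}Q=\frac12 dr\) gives an identity of the form \(\xi(r)=\) (terms involving \(a\ell\) and the brackets). Since \(\operatorname{Ric}(\xi,\xi)\) is constant, I expect \(\xi(r)=0\).

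The \(e\)- and \(\varphi e\)-components give \(e(r/2)\) and \((\varphi e)(r/2)\) in terms of \(e(a\ell)\) and \((\varphi e)(a\ell)\). Explicitly,
\[
e\bigl(\tfrac r2-1+\ell^2-2a\ell\bigr)=\tfrac12 e(r),\qquad
(\varphi e)\bigl(\tfrac r2-1+\ell^2+2a\ell\bigr)=\tfrac12(\varphi e)(r),
\]
modulo connection terms that vanish because \(b=c=0\) and the off-diagonal Ricci terms are zero. These yield \(e(a)=(\varphi e)(a)=0\).

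To obtain \(\xi(a)=0\), I would use the curvature identity \(R(e,\varphi e)\xi\) computed from the connection, compared with the formula \eqref{pre-ric3}. The coefficient \(\xi(a)\) appears there and must vanish because \(Q\xi=\sigma\xi\). Together with the previous step, \(a\) is constant.

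Then \(r\) is recovered from the sectional curvature \(K(e,\varphi e)=g(R(e,\varphi e)\varphi e,e)\). With \(b=c=0\) this is an explicit polynomial in \(a\) and \(\ell\), and in dimension three \(r=2\bigl(K(e,\varphi e)+K(e,\xi)+K(\varphi e,\xi)\bigr)\). Hence \(r\) is constant as well.

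\textbf{Main obstacle.} The delicate step is pinning down \(\xi(a)\). The integrability conditions mix \(\xi(a)\) with the curvature of the distribution orthogonal to \(\xi\), and sign conventions in \eqref{pre-conn1}--\eqref{pre-conn3} must be tracked carefully. If the direct frame computation becomes unwieldy, I would instead invoke Koufogiorgos' theorem \cite{koufogiorgos1995}. It states that a non-Sasakian contact metric three-manifold with \(Q\xi=\sigma\xi\) and \(\sigma\) constant is locally isometric to a unimodular Lie group with a left-invariant contact metric structure (and completeness upgrades this to a global statement). In that model all structure functions \(\ell\), \(a\), \(r\) are constant, which gives the lemma immediately.
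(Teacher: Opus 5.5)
There is a genuine gap in Step 2, at the point you yourself flagged as the main obstacle. Your Step 1 is fine, and so is the first half of Step 2. With \(A=B=Z=0\) and \(b=c=0\), \(Q\) is diagonal in the frame and \(\nabla_ee=\nabla_{\varphi e}\varphi e=\nabla_\xi\xi=0\). Hence \(\operatorname{div}Q=e(\operatorname{Ric}(e,e))\,e+(\varphi e)(\operatorname{Ric}(\varphi e,\varphi e))\,\varphi e\), and comparing with \(\frac12dr\) gives \(e(a)=(\varphi e)(a)=0\) and \(\xi(r)=0\).

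The step that fails is your mechanism for \(\xi(a)\). From \eqref{pre-conn1}--\eqref{pre-conn3} one gets, in general,
\[
R(e,\varphi e)\xi=\bigl(2\ell c-e(\ell)\bigr)e+\bigl((\varphi e)(\ell)-2\ell b\bigr)\varphi e=Be-A\varphi e .
\]
Comparing this with \eqref{pre-ric3} only reproduces the formulas for \(b\) and \(c\). With \(b=c=0\) and \(\ell\) constant it is identically zero, so \(\xi(a)\) does not appear there. In fact \(\xi(a)\) enters no curvature component: here the Ricci tensor is \(\operatorname{diag}\bigl(2a(1-\ell),\,2a(1+\ell),\,2(1-\ell^2)\bigr)\), which involves \(a\) but none of its derivatives. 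No comparison of curvature components can give \(\xi(a)=0\); you need a differential identity.

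The repair uses only what you already have. Since \(b=c=0\), \([e,\varphi e]=\nabla_e\varphi e-\nabla_{\varphi e}e=2\xi\), so
\[
2\xi(a)=e\bigl((\varphi e)(a)\bigr)-(\varphi e)\bigl(e(a)\bigr)=0 .
\]
Alternatively, your sectional curvatures are \(K(e,\varphi e)=2a-1+\ell^2\), \(K(e,\xi)=1-\ell^2-2a\ell\) and \(K(\varphi e,\xi)=1-\ell^2+2a\ell\). These give \(r=4a+2(1-\ell^2)\), so the \(\xi\)-component \(\xi(r)=0\), which you derived but did not use, already forces \(\xi(a)=0\).

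With this fix your argument is a genuinely different, self-contained proof. The paper instead quotes Koufogiorgos' theorem in the form \(Q=\alpha I+\beta\,\eta\otimes\xi+\delta h\) with constant \(\alpha,\beta,\delta\). It reads off \(r=3\alpha+\beta\) from the trace and \(a=-\delta/2\) from the \(e\)- and \(\varphi e\)-eigenvalues. Your frame computation needs neither the citation nor completeness, and it gives the explicit relation \(r=4a+2(1-\ell^2)\).

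Your fallback is essentially the paper's route. It works provided the Lie group model is an isomorphism of contact metric structures and not merely of metrics, since \(a\) depends on \(\varphi\) and \(\xi\), not only on \(g\).
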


\begin{proof}
Since \(M\) is non-Sasakian, the open set
\[
U=\{p\in M:h_p\neq0\}
\]
is nonempty. On \(U\), the condition \(Q\xi=\sigma\xi\), together with
\eqref{pre-ric3}, gives
\[
\sigma=2(1-\ell^2).
\]
Hence \(\ell\) is constant on \(U\). Since \(\ell>0\), we have
\[
\sigma<2.
\]
If \(h\) vanished at some point of \(M\), then at that point
\[
g(Q\xi,\xi)=2,
\]
and hence \(\sigma=2\), contradicting \(\sigma<2\). Therefore \(h\neq0\)
everywhere, and so \(U=M\).

Since \(Q\xi=\sigma\xi\) with \(\sigma\) constant is condition (i) in the main
theorem of Koufogiorgos \cite{koufogiorgos1995}, with
\(\sigma=\operatorname{Tr}l\), that theorem implies
that, in the non-Sasakian case, the Ricci operator has the form
\[
Q=\alpha I+\beta\,\eta\otimes\xi+\delta h,
\]
where \(\alpha,\beta,\delta\) are constants. Taking the trace gives
\[
r=3\alpha+\beta,
\]
because \(\operatorname{tr}h=0\) and \(\operatorname{tr}(\eta\otimes\xi)=1\).
Thus \(r\) is constant.

On the other hand, comparing the \(e\)- and \(\varphi e\)-components of \(Q\),
we have from \eqref{pre-ric1} and \eqref{pre-ric2}
\[
Qe=\left(\frac r2-1+\ell^2-2a\ell\right)e,
\]
and
\[
Q\varphi e=
\left(\frac r2-1+\ell^2+2a\ell\right)\varphi e.
\]
Meanwhile, from
\[
Q=\alpha I+\beta\,\eta\otimes\xi+\delta h
\]
we obtain
\[
Qe=(\alpha+\delta\ell)e,\qquad
Q\varphi e=(\alpha-\delta\ell)\varphi e.
\]
Subtracting the two component equations gives
\[
4a\ell=-2\delta\ell.
\]
Since \(\ell>0\), it follows that
\[
a=-\frac{\delta}{2}.
\]
Thus \(a\) is constant.
\end{proof}

\begin{remark}
The next theorem shows that, under the assumptions \(Q\xi=\sigma\xi\) and
\(\sigma=\mathrm{constant}\), the additional freedom arising from the variable
soliton function disappears in the Reeb-orthogonal non-Sasakian case. The proof
uses Koufogiorgos' classification theorem together with the reduced component
system derived above.
\end{remark}

\begin{theorem}
\label{thm:orthogonal-potential}
Let \((M^3,\varphi,\xi,\eta,g)\) be a connected complete non-Sasakian contact
metric three-manifold satisfying
\[
Q\xi=\sigma\xi,
\]
where \(\sigma\) is a constant. Suppose that \(g\) admits an almost
Ricci--Bourguignon soliton
\[
\operatorname{Ric}+\frac12\mathcal L_Vg=(\lambda+\rho r)g
\]
whose potential vector field is orthogonal to the Reeb vector field, that is,
\[
g(V,\xi)=0.
\]
Then
\[
\lambda+\rho r=\sigma.
\]
In particular, \(\lambda\) is constant, and the almost Ricci--Bourguignon soliton reduces to a Ricci--Bourguignon soliton. Moreover, \(M\) is Einstein.
If \(V\) is not identically zero, then \(M\) is flat.
\end{theorem}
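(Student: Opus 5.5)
The plan is to first promote the local information to global constancy via Lemma \ref{lem:constants-from-koufogiorgos}, then read off \(\Lambda\) from the \((\xi,\xi)\)-component, and finally extract the Einstein and flatness conclusions from the remaining components of the reduced system in Remark \ref{rem:V-orthogonal-xi}, using the Koufogiorgos normal form of \(Q\).

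\textbf{Step 1.} By Lemma \ref{lem:constants-from-koufogiorgos}, \(U=M\) and \(\ell\), \(a\), \(r\) are constant. By Remark \ref{rem:Qxi-sigma-xi} (with \(\sigma\) constant, so \(\xi(\sigma)=0\)), we have \(A=B=Z=0\). The \((\xi,\xi)\)-equation \eqref{orth3} reads \(2(1-\ell^2)=\Lambda\). Since \(\sigma=2(1-\ell^2)\), this gives \(\lambda+\rho r=\sigma\). As \(\sigma\) and \(r\) are constant, \(\lambda=\sigma-\rho r\) is constant, so the almost RB soliton is an honest RB soliton. This step is immediate.

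\textbf{Step 2 (Einstein).} Write \(Q=\alpha I+\beta\,\eta\otimes\xi+\delta h\) with \(\alpha+\beta=\sigma\) and \(a=-\delta/2\), as in the proof of Lemma \ref{lem:constants-from-koufogiorgos}. The soliton equation then says
\[
\tfrac12\mathcal L_Vg=\beta\,(g-\eta\otimes\eta)-\delta\, g(h\cdot,\cdot),
\]
which is a tensor with constant coefficients in the frame \(\{e,\varphi e,\xi\}\). Einstein means \(\beta=\delta=0\), equivalently \(a=0\) and \(\tfrac r2-1+\ell^2=\sigma\).

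To derive this, I would use the constancy of all structure functions. The frame then has constant-coefficient brackets, read from \eqref{pre-conn1}--\eqref{pre-conn3}; for instance \([\xi,e]=(a+\ell+1)\varphi e\) up to sign. The remaining functions \(b,c\) satisfy \(b=A/(2\ell)=0\) and \(c=B/(2\ell)=0\), since \(\ell\) is constant and \(A=B=0\). The system then becomes a linear constant-coefficient first-order system in \((f_1,f_2)\):
\begin{gather*}
e(f_1)=\sigma-\operatorname{Ric}(e,e),\qquad (\varphi e)(f_2)=\sigma-\operatorname{Ric}(\varphi e,\varphi e),\qquad e(f_2)+(\varphi e)(f_1)=0,\\
\xi(f_1)=-(1+\ell-a)f_2,\qquad \xi(f_2)=-(a+\ell-1)f_1.
\end{gather*}
Applying the commutators \([\xi,e]\), \([\xi,\varphi e]\) and \([e,\varphi e]\) to \(f_1,f_2\) yields compatibility conditions. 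For example, \(\xi e(f_1)-e\xi(f_1)=[\xi,e]f_1\): the left side reduces to \((1+\ell-a)e(f_2)\), because \(e(f_1)\) is constant. The right side is a constant multiple of \((\varphi e)(f_1)=-e(f_2)\).

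\textbf{Main obstacle.} Organizing these compatibility conditions is the step I expect to be hardest. The goal is to show that the constants \(\sigma-\operatorname{Ric}(e,e)\) and \(\sigma-\operatorname{Ric}(\varphi e,\varphi e)\) must vanish. If that fails directly, I would fall back on the Koufogiorgos/Blair--Koufogiorgos--Sharma classification: \(M\) is then locally a unimodular Lie group with left-invariant structure. There I would check which left-invariant metrics allow a vector field whose Lie derivative is the constant-coefficient tensor above. Either way, the expected outcome is \(\beta=\delta=0\), so the metric is Einstein.

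\textbf{Step 3 (flatness).} Once \(\operatorname{Ric}=\sigma g\), the soliton equation reduces to \(\mathcal L_Vg=0\), so \(V\) is Killing. A three-dimensional Einstein metric has constant curvature \(\sigma/2\). Non-Sasakian contact metric three-manifolds of constant curvature are flat (Blair), so \(\sigma=0\) and \(\ell=1\). Along the way I expect the constraint \(\xi(f_1)=-(2-a)f_2\), \(\xi(f_2)=-af_1\), with \(a=0\), together with the Killing equations, to force consistency only in the flat case when \(V\not\equiv0\). This matches the claim that a nonzero orthogonal potential forces flatness.
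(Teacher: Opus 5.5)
Your Step 1 matches the paper's argument. Step 3 is acceptable once the Einstein property is known: the constant-curvature result you quote even gives flatness when \(V\equiv0\), which is more than the paper states. The genuine gap is Step 2. The Einstein conclusion is the only non-immediate part of the theorem, and it is never derived. You set up the correct constant-coefficient system and name the correct tool, commutators applied to \(f_1,f_2\). But you then only state that the ``expected outcome'' is \(\beta=\delta=0\), and you offer a fallback (checking left-invariant metrics on unimodular groups) that is not carried out. As written, nothing in the proposal excludes a nonzero \(V\) with \(\tfrac12\mathcal L_Vg=\beta(g-\eta\otimes\eta)-\delta\,g(h\cdot,\cdot)\) and \((\beta,\delta)\neq(0,0)\). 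So the proof is incomplete exactly where its content lies.

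The missing computation is short, and your own sample compatibility condition already contains its first step.
\begin{itemize}
\item Written out, that condition reads \((1+\ell-a)e(f_2)=-(a+\ell+1)e(f_2)\), i.e. \(2(1+\ell)e(f_2)=0\). Hence \(e(f_2)=(\varphi e)(f_1)=0\).
\item Apply \([e,\varphi e]=2\xi\) to \(f_1\) and \(f_2\), using that \(e(f_1)\) and \((\varphi e)(f_2)\) are constant. This gives \(\xi(f_1)=\xi(f_2)=0\), hence \((1+\ell-a)f_2=0=(a+\ell-1)f_1\).
\item Put \(k_1=e(f_1)\) and \(k_2=(\varphi e)(f_2)\). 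Differentiating the two relations above gives \((1+\ell-a)k_2=0\) and \((a+\ell-1)k_1=0\). Applying \([e,\xi]\) to \(f_2\) and \([\varphi e,\xi]\) to \(f_1\) gives \((a+\ell+1)k_2=0\) and \((a-\ell+1)k_1=0\).
\item From these, \(k_2=0\) and \(ak_1=0\). The difference of the two diagonal equations gives \(k_1-k_2=4a\ell\). Together these force \(a=0\) and \(k_1=k_2=0\), which is exactly the Einstein condition.
\end{itemize}
The paper reaches the same conclusion from \((1+\ell-a)f_2=0=(a+\ell-1)f_1\) by a case analysis. In that analysis \(V\neq0\) survives only for \(\ell=1\), \(a=0\), \(r=0\), i.e. \(\operatorname{Ric}=0\).
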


\begin{proof}
Since \(M\) is non-Sasakian, the open set
\[
U=\{p\in M:h_p\neq0\}
\]
is nonempty. On \(U\), choose a local orthonormal \(\varphi\)-basis
\(\{e,\varphi e,\xi\}\) such that
\[
he=\ell e,\qquad h\varphi e=-\ell\varphi e,
\]
where \(\ell>0\).

Since \(Q\xi=\sigma\xi\), \eqref{pre-ric3} gives
\[
A=0,\qquad B=0,
\]
and
\[
\sigma=2(1-\ell^2).
\]
On \(U\), the identity \(\sigma=2(1-\ell^2)\), together with
\(\ell>0\), yields \(\sigma<2\). Since \(\sigma\) is constant on \(M\),
this inequality holds globally. If there existed \(p\in M\setminus U\),
then \(h_p=0\), and by \eqref{2.1b} we would have
\[
g(Q\xi,\xi)_p=2.
\]
On the other hand,
\[
g(Q\xi,\xi)=\sigma<2,
\]
which is a contradiction. Therefore \(h\neq0\) everywhere and hence
\[
U=M.
\]

Moreover, \(\ell\) is constant. Hence
\[
Z=\xi(\ell)=0.
\]
Using
\[
b=\frac{(\varphi e)(\ell)+A}{2\ell},\qquad
c=\frac{e(\ell)+B}{2\ell},
\]
we obtain
\[
b=c=0.
\]

By Lemma \ref{lem:constants-from-koufogiorgos}, the functions \(a\), \(\ell\), and the scalar curvature \(r\) are constant.

Because \(V\perp\xi\), locally we can write
\[
V=f_1e+f_2\varphi e.
\]
Thus \(f_3=0\). From the \((\xi,\xi)\)-component \eqref{S3} of the soliton
equation, we obtain
\[
\Lambda=\lambda+\rho r=2(1-\ell^2)=\sigma.
\]
Since \(\sigma\) and \(r\) are constant, it follows that
\[
\lambda=\sigma-\rho r
\]
is constant. Hence, the almost Ricci--Bourguignon soliton is actually a
Ricci--Bourguignon soliton.

Now the reduced component system becomes
\begin{align}
\frac r2-1+\ell^2-2a\ell+e(f_1)
&=\Lambda, \label{orth-proof1}\\
\frac r2-1+\ell^2+2a\ell+(\varphi e)(f_2)
&=\Lambda, \label{orth-proof2}\\
e(f_2)+(\varphi e)(f_1)
&=0, \label{orth-proof3}\\
\xi(f_1)+(1+\ell-a)f_2
&=0, \label{orth-proof4}\\
\xi(f_2)+(a+\ell-1)f_1
&=0. \label{orth-proof5}
\end{align}
Since \(a,\ell,r\) and \(\Lambda\) are constant, equations
\eqref{orth-proof1} and \eqref{orth-proof2} show that
\[
e(f_1)\quad\text{and}\quad (\varphi e)(f_2)
\]
are constant.

The Lie bracket relations reduce to
\begin{align}
[e,\varphi e]&=2\xi, \label{br1}\\
[e,\xi]&=-(a+\ell+1)\varphi e, \label{br2}\\
[\varphi e,\xi]&=(a-\ell+1)e. \label{br3}
\end{align}

Differentiating \eqref{orth-proof4} along \(e\), and using that \(a\) and
\(\ell\) are constant, gives
\[
e(\xi(f_1))+(1+\ell-a)e(f_2)=0.
\]
By \eqref{orth-proof3},
\[
e(f_2)=-(\varphi e)(f_1),
\]
and hence
\[
e(\xi(f_1))-(1+\ell-a)(\varphi e)(f_1)=0.
\]
On the other hand, applying \eqref{br2} to \(f_1\), and using that
\(e(f_1)\) is constant, gives
\[
e(\xi(f_1))-\xi(e(f_1))
=-(a+\ell+1)(\varphi e)(f_1).
\]
Since \(\xi(e(f_1))=0\), we obtain
\[
e(\xi(f_1))=-(a+\ell+1)(\varphi e)(f_1).
\]
Substituting this into the previous equation yields
\[
-2(\ell+1)(\varphi e)(f_1)=0.
\]
Since \(\ell>0\), it follows that
\[
(\varphi e)(f_1)=0.
\]
Therefore, by \eqref{orth-proof3},
\[
e(f_2)=0.
\]

Applying \eqref{br1} to \(f_1\), we obtain
\[
[e,\varphi e](f_1)=2\xi(f_1).
\]
But
\[
[e,\varphi e](f_1)
=e((\varphi e)(f_1))-(\varphi e)(e(f_1))=0,
\]
because \((\varphi e)(f_1)=0\) and \(e(f_1)\) is constant. Hence
\[
\xi(f_1)=0.
\]
Similarly, applying \eqref{br1} to \(f_2\), and using \(e(f_2)=0\) and
\((\varphi e)(f_2)\) constant, gives
\[
\xi(f_2)=0.
\]

Thus \eqref{orth-proof4} and \eqref{orth-proof5} reduce to
\begin{align}
(1+\ell-a)f_2&=0, \label{caseeq1}\\
(a+\ell-1)f_1&=0. \label{caseeq2}
\end{align}

We now distinguish the possible cases.

First suppose
\[
1+\ell-a=0.
\]
Then \(a=1+\ell\). Since \(\ell>0\),
\[
a+\ell-1=2\ell\neq0,
\]
and hence \eqref{caseeq2} gives
\[
f_1=0.
\]
Thus \(e(f_1)=0\). Applying \eqref{br2} to \(f_2\), and using
\(e(f_2)=\xi(f_2)=0\), we get
\[
0=[e,\xi](f_2)=-(a+\ell+1)(\varphi e)(f_2).
\]
Since \(a+\ell+1=2(\ell+1)\neq0\), this gives
\[
(\varphi e)(f_2)=0.
\]
Subtracting \eqref{orth-proof1} from \eqref{orth-proof2}, we obtain
\[
4a\ell+(\varphi e)(f_2)-e(f_1)=0.
\]
Therefore
\[
4a\ell=0.
\]
Since \(\ell>0\), we get \(a=0\), contradicting \(a=1+\ell\). Hence this
case cannot occur.

Next suppose
\[
a+\ell-1=0.
\]
Then \(a=1-\ell\). If \(\ell\neq1\), then
\[
1+\ell-a=2\ell\neq0,
\]
so \eqref{caseeq1} gives
\[
f_2=0.
\]
Thus \(e(f_2)=\xi(f_2)=0\), and from \eqref{orth-proof3} we obtain
\[
(\varphi e)(f_1)=0.
\]
Applying \eqref{br3} to \(f_1\), and using \(\xi(f_1)=0\), gives
\[
0=[\varphi e,\xi](f_1)=(a-\ell+1)e(f_1).
\]
Since \(a=1-\ell\), we have
\[
a-\ell+1=2(1-\ell).
\]
As \(\ell\neq1\), it follows that
\[
e(f_1)=0.
\]
Also \((\varphi e)(f_2)=0\), because \(f_2=0\). Subtracting
\eqref{orth-proof1} from \eqref{orth-proof2} gives again
\[
4a\ell=0.
\]
Since \(\ell>0\), we get \(a=0\). But \(a=1-\ell\), hence \(\ell=1\),
contradicting \(\ell\neq1\). Therefore \(\ell=1\), and consequently
\[
a=0.
\]

In this case
\[
\Lambda=2(1-\ell^2)=0.
\]
Moreover, from \eqref{caseeq1} we get
\[
(1+\ell-a)f_2=2f_2=0,
\]
so
\[
f_2=0.
\]
Equation \eqref{orth-proof2} then gives
\[
\frac r2=0,
\]
and hence
\[
r=0.
\]
Equation \eqref{orth-proof1} gives
\[
e(f_1)=0.
\]
Using \(\ell=1\), \(a=0\), \(r=0\), \(A=B=Z=0\), and \(b=c=0\), all Ricci
components vanish. Hence
\[
\operatorname{Ric}=0.
\]
Since \(M\) is three-dimensional, this implies that \(M\) is flat.

Finally, suppose neither
\[
1+\ell-a=0
\]
nor
\[
a+\ell-1=0
\]
holds. Then \eqref{caseeq1} and \eqref{caseeq2} imply
\[
f_1=f_2=0.
\]
Thus \(V=0\), and the soliton equation reduces to
\[
\operatorname{Ric}=\Lambda g.
\]
Hence \(M\) is Einstein.

Combining the above cases, \(M\) is Einstein. If \(V\) is not identically zero,
then only the flat case can occur.
\end{proof}

\begin{remark}
In the flat exceptional case appearing in the proof of
Theorem \ref{thm:orthogonal-potential}, one has \(\ell=1\), and hence
\[
\sigma=2(1-\ell^2)=0.
\]
This is the same value of \(\sigma=g(Q\xi,\xi)\) that appears in the study of
three-dimensional contact metric manifolds with vanishing Jacobi operator
\(l=R(\cdot,\xi)\xi\); see Koufogiorgos and Tsichlias
\cite{koufogiorgosTsichlias2009}.
\end{remark}

\begin{corollary}
\label{cor:orthogonal-flat}
Under the hypotheses of Theorem \ref{thm:orthogonal-potential}, if the
orthogonal potential vector field \(V\) is not identically zero, then \(M\) is flat.
Equivalently, every non-flat example under these hypotheses has \(V\equiv0\) and is Einstein.
\end{corollary}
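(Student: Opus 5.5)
This corollary follows directly from Theorem \ref{thm:orthogonal-potential}, so the plan is to read off the two assertions from the case analysis in its proof rather than to redo any computation.

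\textbf{First assertion.} The proof of Theorem \ref{thm:orthogonal-potential} shows that \(U=M\) and that \(a\), \(\ell\), \(r\) and \(\Lambda=\sigma\) are constant. It then reduces \eqref{orth-proof4}--\eqref{orth-proof5} to \eqref{caseeq1}--\eqref{caseeq2}, and three cases remain.
\begin{itemize}
\item The case \(1+\ell-a=0\) was shown to be impossible.
\item The case \(a+\ell-1=0\) forces \(\ell=1\) and \(a=0\). The component equations then give \(\operatorname{Ric}=0\), which in dimension three means the metric is flat.
\item In the generic case, \eqref{caseeq1}--\eqref{caseeq2} give \(f_1=f_2=0\), so \(V=0\) on each adapted frame domain. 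Since these domains cover \(U=M\), this gives \(V\equiv0\).
\end{itemize}
Now suppose \(V\) is not identically zero. Then the generic case cannot hold at every point. The quantities \(a\), \(\ell\), \(r\) are global constants, so the case distinction is global, and the only surviving case is the flat one. This proves that \(M\) is flat.

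\textbf{Second assertion.} I would take the contrapositive. If \(M\) is not flat, the flat case is excluded, so the generic case holds and \(V\equiv0\). The soliton equation then reads \(\operatorname{Ric}=\Lambda g\) with \(\Lambda=\sigma\) constant, so \(M\) is Einstein, which is also stated in Theorem \ref{thm:orthogonal-potential}.

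\textbf{Main obstacle.} The one point that needs care is that \(a\) is only defined relative to a local \(\varphi\)-basis, so one must check that the case split does not depend on the frame. By Lemma \ref{lem:constants-from-koufogiorgos}, \(a=-\delta/2\) with \(\delta\) a global constant coming from Koufogiorgos' form of \(Q\). Hence the conditions \(1+\ell-a=0\) and \(a+\ell-1=0\) hold or fail uniformly on all of \(M\). The same case therefore applies on every frame domain, and the conclusion \(V\equiv0\) is global.
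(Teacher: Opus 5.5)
Your proposal is correct and takes the same route as the paper: the paper simply says the corollary follows from Theorem~\ref{thm:orthogonal-potential}, and you read it off that theorem's case analysis. Your extra check that the case split is global, since $a$ and $\ell$ are constants on the connected manifold $M$ by Lemma~\ref{lem:constants-from-koufogiorgos}, is a sound detail that the paper leaves implicit.
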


\begin{proof}
This follows immediately from Theorem \ref{thm:orthogonal-potential}.
\end{proof}

\begin{example}
\label{ex:sphere}
Let \(S^3\) be the unit sphere with its standard Sasakian contact metric
structure \((\varphi,\xi,\eta,g)\). Then \(\xi\) is Killing,
\[
\operatorname{Ric}=2g,\qquad r=6.
\]
For any constant \(c\), take
\[
V=c\xi.
\]
Since \(\xi\) is Killing, \(\mathcal L_Vg=0\). Hence
\[
\operatorname{Ric}+\frac12\mathcal L_Vg
=
2g.
\]
Thus \((S^3,g,V,\lambda,\rho)\) is a Ricci--Bourguignon soliton provided
\[
\lambda=2-6\rho.
\]
This gives a trivial Reeb-aligned Ricci--Bourguignon soliton in the Sasakian
case.
\end{example}

\begin{remark}
The results show that the contact metric structure rigidly constrains the additional freedom introduced by allowing the soliton function to vary. Under the condition \(Q\xi=\sigma\xi\), the almost Ricci--Bourguignon equation either
forces a Reeb-aligned potential to vanish on the non-Sasakian region or forces
the soliton function to become constant in the Reeb-orthogonal non-Sasakian
case.
\end{remark}

\bibliographystyle{alpha}
	\bibliography{sn-bibliography}

\end{document}